\numberwithin{equation}{section}
\newtheoremstyle{thmlemcorr}{10pt}{10pt}{\itshape}{}{\bfseries}{.}{10pt}{{\thmname{#1}\thmnumber{ #2}\thmnote{ (#3)}}}
\newtheoremstyle{thmlemcorr*}{10pt}{10pt}{\itshape}{}{\bfseries}{.}\newline{{\thmname{#1}\thmnumber{ #2}\thmnote{ (#3)}}}
\newtheoremstyle{defi}{10pt}{10pt}{\itshape}{}{\bfseries}{.}{10pt}{{\thmname{#1}\thmnumber{ #2}\thmnote{ (#3)}}}
\newtheoremstyle{remexample}{10pt}{10pt}{}{}{\bfseries}{.}{10pt}{{\thmname{#1}\thmnumber{ #2}\thmnote{ (#3)}}}
\newtheoremstyle{ass}{10pt}{10pt}{}{}{\bfseries}{.}{10pt}{{\thmname{#1}\thmnumber{ A#2}\thmnote{ (#3)}}}
\theoremstyle{thmlemcorr}
\newtheorem{theorem}{Theorem}
\numberwithin{theorem}{section}
\newtheorem{lemma}[theorem]{Lemma}
\newtheorem{corollary}[theorem]{Corollary}
\theoremstyle{thmlemcorr*}
\newtheorem{theorem*}{Theorem}
\newtheorem{lemma*}[theorem]{Lemma}
\newtheorem{corollary*}[theorem]{Corollary}
\newtheorem{proposition*}[theorem]{Proposition}
\newtheorem{problem*}[theorem]{Problem}
\newtheorem{conjecture*}[theorem]{Conjecture}
\newtheorem{question*}[theorem]{Question}
\theoremstyle{defi}
\theoremstyle{remexample}
\newtheorem{remark}[theorem]{Remark}
\theoremstyle{ass}
\newcommand{\Crm}{\mathrm{C}}
\newcommand{\Hrm}{\mathrm{H}}
\newcommand{\Lrm}{\mathrm{L}}
\newcommand{\Wrm}{\mathrm{W}}
\newcommand{\Acal}{\mathcal{A}}
\newcommand{\Ecal}{\mathcal{E}}
\newcommand{\Fcal}{\mathcal{F}}
\newcommand{\Scal}{\mathcal{S}}
\newcommand{\Abb}{\mathbb{A}}
\DeclareMathOperator{\id}{id}
\DeclareMathOperator{\diverg}{div}
\DeclareMathOperator{\curl}{curl}
\DeclareMathOperator{\dist}{dist}
\DeclareMathOperator{\spn}{span}
\DeclareMathOperator{\supp}{supp}
\newcommand{\ii}{\mathrm{i}}
\newcommand{\setb}[2]{\bigl\{\, #1 \ \ \textup{\textbf{:}}\ \ #2 \,\bigr\}}
\newcommand{\setBB}[2]{\biggl\{\, #1 \ \ \textup{\textbf{:}}\ \ #2 \,\biggr\}}
\newcommand{\dprb}[1]{\bigl\langle #1 \bigr\rangle}
\newcommand{\dd}{\;\mathrm{d}}
\newcommand{\N}{\mathbb{N}}
\newcommand{\R}{\mathbb{R}}
\newcommand{\C}{\mathbb{C}}
\newcommand{\loc}{\mathrm{loc}}
\newcommand{\sym}{\mathrm{sym}}
\newcommand{\ONE}{\mathbbm{1}}
\newcommand{\toLOne}{\overset{\Lrm^1}{\to}}
\newcommand{\weakONE}{\overset{\Lrm^1}{\rightharpoonup}}
\newcommand{\eps}{\epsilon}
\newcommand{\Rdds}{(\R^{d}\otimes\R^d)_\sym}
\DeclareMathOperator{\rk}{rk}
\def\XXint#1#2#3{{\setbox0=\hbox{$#1{#2#3}{\int}$} 
\vcenter{\hbox{$#2#3$}}\kern-.5\wd0}}
\renewcommand{\eps}{\varepsilon}
\renewcommand{\phi}{\varphi}
\title[Two-state problem for general differential operators]{On the two-state problem for general differential operators}
 \dedicatory{To Carlo Sbordone, for his 70th birthday.}
\author[G.~De Philippis]{Guido De Philippis}
\address{\textit{G.~De Philippis:} SISSA, Via Bonomea 265, 34136 Trieste, Italy.}
\email{guido.dephilippis@sissa.it}
\author[L.~Palmieri]{Luca Palmieri}
\address{\textit{L.~Palmieri:} SISSA, Via Bonomea 265, 34136 Trieste, Italy.} 
\email{contact@lpalmieri.com}
\author[F.~Rindler]{Filip Rindler}
\address{\textit{F.~Rindler:} Mathematics Institute, University of Warwick, Coventry CV4 7AL, UK, and The Alan Turing Institute, British Library, 96 Euston Road, London NW1 2DB London, UK.}
\email{F.Rindler@warwick.ac.uk}
\begin{document}

\begin{abstract}
In this note we generalize the Ball-James rigidity theorem for gradient differential inclusions to the setting of a general linear differential constraint. In particular, we prove the rigidity for approximate solutions to the two-state inclusion with incompatible states for merely $\Lrm^1$-bounded sequences. In this way, our theorem can be seen as a result of compensated compactness in the linear-growth setting.
\end{abstract}

\maketitle

\section{Introduction}

In~\cite{BallJames87} Ball and James proved the following rigidity property for gradient differential inclusions:

\begin{theorem}[Ball--James 1987~\cite{BallJames87}]\label{thm:BJ} Let \(\Omega\subset \R^d\) be an open, bounded, and connected set and let $A,B \in \R^m \otimes \R^d$ be $(m \times d)$-matrices with \(\rk (A-B)\ge 2\). Then:
\begin{itemize}
\item[(A)] If  \(u\in \Wrm^{1,\infty} (\Omega;\R^m) \) satisfies the differential inclusion
\begin{equation} \label{eq:diffincl}
    Du(x) \in \{A,B\}  \quad\text{for a.e. \(x\in \Omega\),}
\end{equation}
then either \(Du\equiv A\) or \(Du\equiv B\). 
\item[(B)] Let \((u_j)\subset \Wrm^{1,\infty}(\Omega;\R^m)\)  be a uniformly norm-bounded sequence of   maps such that
\[
\dist (Du_j ,\{A,B\})\to 0 \quad \text{in measure}.
\]
Then, up to extracting a subsequence, either
\[\qquad\qquad
\int_{\Omega} |Du_{j}(x) - A| \dd x \to 0 \quad \text{or} \quad  \int_{\Omega} |Du_{j}(x) - B| \dd x \to 0\qquad\textrm{as \(j\to \infty\).}
\]
\end{itemize}
\end{theorem}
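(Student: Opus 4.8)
The plan is to prove the exact rigidity in part (A) by a $\curl$-and-rank argument, and then to deduce the approximate statement in part (B) from (A) together with the weak-$*$ continuity of $2\times 2$ minors, which is the sole compensated-compactness ingredient.

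For part (A), put $a:=A-B$. Since $\rk a\ge 2$, the rows of $a$ span a subspace of $\R^d$ of dimension at least two, so we may fix $i_1,i_2\in\{1,\dots,m\}$ for which the $i_1$-th and $i_2$-th rows $r_1,r_2\in\R^d$ of $a$ are linearly independent. Set $E:=\{x\in\Omega : Du(x)=A\}$ (defined up to a Lebesgue-null set); then \eqref{eq:diffincl} reads $Du=B+a\,\ONE_E$ a.e., whose $i_\ell$-th row gives $\nabla u^{i_\ell}=b_\ell+r_\ell\,\ONE_E$ a.e., with $b_\ell,r_\ell\in\R^d$ the $i_\ell$-th rows of $B$ and $a$. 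Because $\nabla u^{i_\ell}$ is a distributional gradient, $\curl(r_\ell\,\ONE_E)=0$ in $\Dcal'(\Omega)$, i.e.\ $r_{\ell,j}\,\partial_k\ONE_E=r_{\ell,k}\,\partial_j\ONE_E$ for all $j,k$. For fixed $\ell$ this forces the distributional gradient $\nabla\ONE_E$ to equal a scalar distribution times $r_\ell$; carrying this out for $\ell=1,2$ and using the linear independence of $r_1,r_2$ yields $\nabla\ONE_E=0$. As $\Omega$ is connected, $\ONE_E$ is a.e.\ constant, so $E=\Omega$ or $E=\emptyset$ up to a null set, i.e.\ $Du\equiv A$ or $Du\equiv B$.

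For part (B), pass to a subsequence so that $u_j\toweakstar u$ in $\Wrm^{1,\infty}(\Omega;\R^m)$, hence $Du_j\toweakstar Du$ in $\Lrm^\infty(\Omega;\R^m\otimes\R^d)$. Choose measurable $\chi_j\colon\Omega\to\{0,1\}$ with $\chi_j(x)=1$ exactly when $Du_j(x)$ is at least as close to $B$ as to $A$, so that $\rho_j:=Du_j-A+a\chi_j$ satisfies $\abs{\rho_j}=\dist(Du_j,\{A,B\})$, which is uniformly bounded and $\to 0$ in measure, hence $\rho_j\to 0$ in $\Lrm^2(\Omega)$. After a further extraction $\chi_j\toweakstar\theta$ in $\Lrm^\infty(\Omega)$ with $\theta\colon\Omega\to[0,1]$, and $Du=A-a\theta$. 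Let $M$ be a $2\times 2$ minor on $\R^m\otimes\R^d$ with associated symmetric bilinear form $\beta$. Expanding $M(Du_j)=M(A-a\chi_j+\rho_j)$, using $\chi_j^2=\chi_j$, and noting that every term involving $\rho_j$ tends to $0$ in $\Lrm^1(\Omega)$ (by Cauchy--Schwarz and $\rho_j\to 0$ in $\Lrm^2$), we get
\[
M(Du_j)=M(A)+\bigl(M(a)-2\beta(A,a)\bigr)\chi_j+\SmallO(1)\quad\text{in }\Lrm^1(\Omega).
\]
On the other hand $M$ is a null Lagrangian, hence weakly-$*$ continuous along $\Lrm^\infty$-bounded gradients, so $M(Du_j)\toweakstar M(Du)=M(A)+M(a)\,\theta^2-2\beta(A,a)\,\theta$. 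Comparing the two weak-$*$ limits gives $M(a)\,\theta(1-\theta)=0$ a.e.; since $\rk a\ge 2$ some $2\times 2$ minor has $M(a)\ne 0$, whence $\theta\in\{0,1\}$ a.e. Then $Du=A-a\theta$ is a gradient taking values in $\{A,B\}$ a.e., so part (A) forces $\theta\equiv 0$ or $\theta\equiv 1$. In the first case $\chi_j\ge 0$ with $\chi_j\toweakstar 0$ gives $\chi_j\to 0$ in $\Lrm^1(\Omega)$, so $\int_\Omega\abs{Du_j-A}\dd x\le\abs{a}\int_\Omega\chi_j\dd x+\norm{\rho_j}_{\Lrm^1}\to 0$; the case $\theta\equiv 1$ is symmetric.

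The one nontrivial ingredient — and the step that becomes the real obstacle once one aims for a general differential operator in place of $\curl$ — is the compensated-compactness fact that the $2\times 2$ minors are weakly-$*$ continuous along $\Lrm^\infty$-bounded gradient sequences, which is precisely what lets us equate the two expressions for the weak limit of $M(Du_j)$. The mechanism is the mismatch between the weak limits of $\chi_j$ and of $\chi_j^2=\chi_j$: together with $M(a)\ne 0$ it forbids nontrivial oscillation between the two incompatible states. Upgrading this to a general constant-rank operator $\Acal$ requires replacing the $2\times 2$ minors by $\Lambda_\Acal$-quasiaffine functions and, crucially, dispensing with the $\Lrm^\infty$ bound in favour of a mere $\Lrm^1$ bound, which is the compensated-compactness-in-the-linear-growth-setting content of the present note.
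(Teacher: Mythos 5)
Your proof is correct, but it follows the classical route rather than the one taken in the paper. The paper does not re-prove Theorem~\ref{thm:BJ} by itself; it obtains it as the special case $\Acal=\curl$ of Theorem~\ref{thm:main}, which is established by a Fourier-multiplier/elliptic-regularization argument. For part~(A) the paper multiplies the symbol identity by $\overline{\Abb(\xi)\lambda}$ and bootstraps Sobolev regularity to show $\ONE_E\in\Crm^\infty$, whereas you use the cofactor/curl identity on the rows of $Du$ to force $\nabla\ONE_E=0$; both conclude via connectedness. For part~(B) the paper splits $w_j=T_1(z_j)+T_2(u_j)+T_3(w_j)$ into Fourier multipliers, using H\"ormander--Mihlin for $T_1$ and compactness of the negative-order pieces $T_2,T_3$, whereas you use the Tartar/Ball--James compensated-compactness argument through weak-* continuity of $2\times 2$ minors. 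Your route is shorter and more elementary for $\curl$, but it leans crucially on two features that do not survive the generalization: the uniform $\Lrm^\infty$ bound (minors are quadratic, so an $\Lrm^1$ bound alone would not give weak convergence of $M(Du_j)$) and the ready supply of null Lagrangians for gradients. The paper's approach, by contrast, works under a mere $\Lrm^1$ bound and for any operator $\Acal$. One small correction to your closing remark: Theorem~\ref{thm:main} does not require $\Acal$ to be of constant rank, and the proof does not pass through $\Acal$-quasiaffine functions at all -- the Fourier approach replaces the weak continuity of nonlinear quantities by direct linear multiplier estimates, which is precisely what makes the $\Lrm^1$ setting accessible.
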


The first part is known as {\em rigidity for exact solutions} of the differential inclusion~\eqref{eq:diffincl}, while the second one concerns {\em rigidity for approximate solutions}; see~\cite[Chapter 2]{Muller99} and~\cite[Chapter~8]{RindlerBOOK} for a discussion. Note also that, by a simple laminate construction, if \(\rk (A-B)=1\) then there exists a non-affine \(u\in \Wrm^{1,\infty}(\Omega;\R^m)\) such that \(Du(x)\in \{A,B\}\) for almost all \(x\in \Omega\). Moreover, laminates are the only possible solutions of this inclusion.

By recalling that on a simply connected domain \(v=Du\) if and only if \(\curl v=0\), the above results can be  summarized as follows:


\emph{ The inclusion 
\[
\begin{cases}
\curl v=0,\\
v\in \{A,B\}
\end{cases}
\]
is rigid for both exact and approximate solutions (in $\Lrm^\infty(\Omega;\R^{m \times d})$) if and only if \(\rk (A-B)\ge 2\).
}


In view of the above discussion it is natural to ask what happens when we replace the \(\curl\) with a general differential operator acting on vector-valued  functions  \(v\in \Crm^\infty(\Omega;\mathbb R^\ell)\), \(\Omega\subset \R^d\), namely
\begin{equation}\label{e:defA}
\Acal v:=\sum_{|\alpha|=k} A_\alpha \partial^\alpha v,
\end{equation}
where the sum is over all multi-indices \(\alpha \in (\N \cup \{0\})^d\), \(A_\alpha\in \R^n\otimes \R^\ell\) are (constant)  matrices (note that then the equation \(\Acal v=0\) is actually a {\em system of equations}).

Since the seminal work of Murat and Tartar~\cite{Tartar79,Tartar83,Murat78,Murat79} it is well understood that the role of rank-one matrices for gradient inclusions should be played by the {\em wave cone} associated to \(\Acal\):
\[
\Lambda_{\Acal}:=\bigcup_{|\xi|=1} \ker \Abb(\xi),\qquad 
\Abb(\xi):=(2\pi \ii)^{k}\sum_{|\alpha|=k} A_\alpha \xi^\alpha,
\]
where $\xi^\alpha=\xi_1^{\alpha_1}\cdots\xi_d^{\alpha_d}$. Indeed, one may easily check that if \(\lambda-\mu \in \ker \Abb(\xi)\) for some \(\xi\ne 0\) and \(h:\R\to \{0,1\}\) is measurable, then the function 
\begin{equation}\label{e:sol}
v(x):=\lambda h(x\cdot \xi)+\mu(1-h(x\cdot \xi) )
\end{equation}
is a solution to the differential inclusion 
\[
\begin{cases}
\Acal v=0  \quad\text{in the sense of distributions},\\
v\in \{\lambda,\mu\} \quad\text{a.e.}
\end{cases}
\]

Our main result generalizes the Ball--James Theorem~\ref{thm:BJ} to general operators $\Acal$ as above.

\begin{theorem}\label{thm:main}
Let \(\Omega\subset \R^d\) be an open, bounded, and connected set, and let \(\Acal\) be as in~\eqref{e:defA}. Suppose that $\lambda, \mu \in \R^\ell$ with $\lambda-\mu \notin \Lambda_\Acal$. Then:
\begin{itemize}
\item[(A)] If  \(v\in \Lrm^{\infty} (\Omega;\R^\ell) \) is such that 
\[
\Acal v=0\quad\textrm{in the sense of distributions}
\]
and 
\[
  v(x) \in \{\lambda,\mu\}  \quad\text{for a.e. \(x\in \Omega\),}
\]
then either \(v\equiv \lambda\) or \(v\equiv \mu\). 
\item[(B)] Let \((v_j)\subset \Lrm^1(\Omega;\R^\ell)\)  be a uniformly norm-bounded sequence  of maps such that 
\[
\Acal v_j=0\quad\textrm{in the sense of distributions}.
\]
Assume that
\[
\lim_{j\to \infty} \int_{\Omega} \dist (v_j(x),\{\lambda,\mu\}) \dd x =0.
\]
 Then, up to extracting a subsequence, either
\[\qquad\qquad
\int_{\Omega} |v_{j}(x) - \lambda| \dd x \to 0 \quad \textrm{or} \quad  \int_{\Omega} |v_{j}(x) - \mu| \dd x \to 0\qquad\textrm{as \(j\to \infty\).}
\]
\end{itemize}
\end{theorem}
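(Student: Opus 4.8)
The plan is to deduce both parts from the following rigidity statement for \emph{exact} solutions, which already gives part~(A): if $v\in\Lrm^\infty(\Omega;\R^\ell)$ satisfies $\Acal v=0$ in $\mathcal D'(\Omega)$ and $v(x)\in\{\lambda,\mu\}$ for a.e.\ $x$, then $v$ is constant. To see this, set $e:=\lambda-\mu$, which is nonzero since $0\in\Lambda_\Acal$ while $e\notin\Lambda_\Acal$. As $e\notin\Lambda_\Acal$ we have $\Bbb(\xi):=\Abb(\xi)e\neq0$ for all $\xi\in\R^d\setminus\{0\}$, so $\xi\mapsto|\Bbb(\xi)|^2$ is a homogeneous polynomial of degree $2k$ that is strictly positive away from the origin, i.e.\ the (real, scalar) symbol of an elliptic operator $L$ of order $2k$ — concretely $L=\Bcal^\ast\Bcal$ with $\Bcal u:=\Acal(eu)$. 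Writing $v-\mu=e\,\ONE_E$ with $E:=\{v=\lambda\}$ and using $\Acal(\mathrm{const})=0$, we get $L\ONE_E=0$ in $\mathcal D'(\Omega)$; by elliptic regularity $\ONE_E\in\Crm^\infty(\Omega)$, and a smooth $\{0,1\}$-valued function on the connected set $\Omega$ is constant, so $v\equiv\lambda$ or $v\equiv\mu$.

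For part~(B), note first that $\dist(\frarg,\{\lambda,\mu\})\ge|\frarg|-C$ with $C:=\max\{|\lambda|,|\mu|\}$, so the hypothesis $\int_\Omega\dist(v_j,\{\lambda,\mu\})\dd x\to0$ makes $(v_j)$ equi-integrable. After passing to a subsequence, $v_j\toweak v$ in $\Lrm^1(\Omega;\R^\ell)$ and $(v_j)$ generates a (classical, oscillation-only) Young measure $(\nu_x)_{x\in\Omega}$; testing the generated measure against the $1$-Lipschitz function $\dist(\frarg,\{\lambda,\mu\})$ and using equi-integrability forces $\supp\nu_x\subseteq\{\lambda,\mu\}$, whence $\nu_x=\theta(x)\delta_\lambda+(1-\theta(x))\delta_\mu$ and $v(x)=\theta(x)\lambda+(1-\theta(x))\mu$ for a measurable $\theta\colon\Omega\to[0,1]$, while $\Acal v=0$.

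The heart of the matter is to show $\theta(x)\in\{0,1\}$ for a.e.\ $x$. Since $\Acal v_j=0$ and $(v_j)$ is equi-integrable, the theory of $\Acal$-quasiconvexity and Young measures yields the Jensen inequality $h(\bar\nu_x)\le\langle\nu_x,h\rangle$ for a.e.\ $x$ and every $\Acal$-quasiconvex $h\colon\R^\ell\to\R$ of at most linear growth (equi-integrability excludes concentration terms). It therefore suffices to construct such an $h$ with $h(\lambda)=h(\mu)=0$ and $h>0$ on the open segment $(\lambda,\mu)$: then $h(\bar\nu_x)=h(\mu+\theta(x)e)>0$ as soon as $\theta(x)\in(0,1)$, contradicting $h(\bar\nu_x)\le\theta(x)h(\lambda)+(1-\theta(x))h(\mu)=0$. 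If $e\notin\spn\Lambda_\Acal$ this is elementary: picking $\eta\in\R^\ell$ with $\eta\perp\Lambda_\Acal$ and $\langle e,\eta\rangle\neq0$, every $h(z)=g(\langle z,\eta\rangle)$ is $\Acal$-quasiaffine — because any $\Acal$-free field $w$ satisfies $\langle w,\eta\rangle\equiv0$, its Fourier transform lying in $\ker\Abb(\xi)\subseteq\Lambda_\Acal$ — so a Lipschitz ``tent'' profile $g$ that vanishes at $\langle\lambda,\eta\rangle$ and $\langle\mu,\eta\rangle$ and is positive between them works. The remaining case $e\in\spn\Lambda_\Acal\setminus\Lambda_\Acal$, in which no linear functional separates $\lambda$ from $\mu$, is the main obstacle: one must build a genuinely non-affine $\Acal$-quasiconvex function adapted to the wave cone (or, alternatively, invoke rigidity for $\Acal$-free measures to rule out non-trivial two-point $\Acal$-free Young measures directly).

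Once $\theta\in\{0,1\}$ a.e., $v=\lambda\ONE_E+\mu\ONE_{\Omega\setminus E}$ is an exact $\Acal$-free solution valued in $\{\lambda,\mu\}$, so by the first paragraph $v\equiv\lambda$ or $v\equiv\mu$; say $v\equiv\lambda$. Let $\chi_j:=\ONE_{\{|v_j-\lambda|\le|v_j-\mu|\}}\in\{0,1\}$ and $r_j:=v_j-\mu-e\chi_j$, so that $\|r_j\|_{\Lrm^1}=\int_\Omega\dist(v_j,\{\lambda,\mu\})\dd x\to0$. Then $e(\chi_j-1)=v_j-\lambda-r_j\toweak0$ in $\Lrm^1(\Omega;\R^\ell)$; pairing with a covector dual to $e$ gives $\chi_j-1\toweak0$ in $\Lrm^1(\Omega)$, and since $\chi_j-1\le0$ pointwise we conclude $\|\chi_j-1\|_{\Lrm^1}=\bigl|\int_\Omega(\chi_j-1)\dd x\bigr|\to0$. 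Hence $v_j\to\lambda$ in $\Lrm^1$; the case $v\equiv\mu$ is symmetric, completing the proof.
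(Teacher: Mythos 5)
Your proof of part~(A) is correct and is essentially the paper's argument in different packaging: the key observation in both is that, since $e:=\lambda-\mu\notin\Lambda_\Acal$, the constant-coefficient operator with symbol $|\Abb(\xi)e|^2$ is elliptic of order $2k$ and annihilates $\ONE_E$. You apply interior elliptic regularity directly to $L=\Bcal^*\Bcal$; the paper instead localizes with a cut-off, Fourier-transforms, exploits the commutator $[\Acal,\phi]$ being of order $k-1$, and bootstraps a gain of $2k$ derivatives. Same mechanism, two presentations.

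Part~(B) departs from the paper and has a gap that is fatal as things stand --- one you flag but do not close. Your plan identifies the Young measure as $\nu_x=\theta(x)\delta_\lambda+(1-\theta(x))\delta_\mu$ and then needs an $\Acal$-quasiconvex integrand $h$ of linear growth with $h(\lambda)=h(\mu)=0$ and $h>0$ on the open segment. The case you can handle, $e\notin\spn\Lambda_\Acal$, is degenerate: for every operator where the two-state problem is actually interesting --- $\curl$, $\diverg$, $\curl\curl$, in fact any first-order operator admitting laminates --- one has $\spn\Lambda_\Acal=\R^\ell$, so you are always in the case $e\in\spn\Lambda_\Acal\setminus\Lambda_\Acal$ that you leave open. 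Constructing the separating $\Acal$-quasiconvex integrand there is not a side remark: for $\Acal=\curl$ it is essentially Zhang's lemma together with two-gradient rigidity, i.e.\ at least as hard as what is being proved, and your fallback of ``invoking rigidity for two-point $\Acal$-free Young measures'' is circular, since that rigidity is exactly the statement at stake. A secondary issue: the Jensen inequality $h(\bar\nu_x)\le\dprn{\nu_x,h}$ for $\Acal$-quasiconvex $h$ along equi-integrable $\Acal$-free sequences is, in the form you use, a constant-rank theorem (Fonseca--M\"{u}ller), whereas Theorem~\ref{thm:main} imposes no constant-rank hypothesis on $\Acal$. The paper avoids both problems by never invoking quasiconvexity at all: it shows $\normn{v_j-\lambda\ONE_{E_j}}_{\Lrm^1}\to0$, then proves strong $\Lrm^1$-precompactness of $(\ONE_{E_j})$ via the decomposition $w_j=T_1(z_j)+T_2(u_j)+T_3(w_j)$, combining the H\"{o}rmander--Mihlin weak-$(1,1)$ bound for $T_1$ applied to $z_j\to0$, compact Sobolev embeddings for the lower-order commutator pieces $T_2,T_3$, and the positivity of $w_j$ to upgrade to $\Lrm^1$-convergence; only then is part~(A) applied to identify the limit. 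Your concluding step --- deducing $\normn{v_j-\lambda}_{\Lrm^1}\to0$ from $v\equiv\lambda$ using the sign of $\chi_j-1$ --- is fine, but it rests on the unproven middle step.
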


\begin{remark}
As mentioned above, if \(\lambda-\mu\in \Lambda_\Acal\), then it is always possible to find a non-trivial solution of the exact differential inclusion. However, for operators of order greater or equal than two, not all the solutions are given by~\eqref{e:sol} and more general structures can arise. Classifying all of them would be an interesting problem.
\end{remark}



For particular choices of \(\Acal\), some instances of Theorem~\ref{thm:main} were already known: as mentioned before, if \(\Acal=\curl\), then it essentially reduces to the Ball--James Theorem~\eqref{thm:BJ} up to the  improvement on the summability assumption on the sequence \((v_j)_j=(Du_j)_j\) (which could also have been obtained in this particular case by the combination of Zhang's Lemma~\cite{Zhang92} and the simple Lemma~\ref{lm:mis} below). In the case when  \(\Acal=\diverg\), the above result was obtained by Garroni and Nesi~\cite{GarroniNesi04}. In the same paper the result is also generalized to  some other first-order operators. Other types of first-order operators  (essentially  combination of the \(\diverg\) and \(\curl\)) have been treated by Barchiesi in his master thesis~\cite{Bar03}. 

We refer to Remark~\ref{rmk:lo} for an extension of this theorem to non-homogeneous operators and non-zero (but asymptotically vanishing) right-hand sides in the constraint PDE.

Another interesting choice of operator is the so called ``\(\curl \curl\)''-operator, a second-order operator  whose kernel identifies  those symmetric matrix-valued fields \(S\colon \Omega \subset \R^d \to \Rdds\), $\Omega \subset \R^d$ a bounded simply-connected Lipschitz domain, for which there exists \(u \colon\Omega \to \R^d\)  such that
\[
S=\Ecal u:=\frac{Du+(Du)^T}{2}.
\]
For this operator the wave cone is given by the elementary symmetrized products,
\[
\Lambda_{\curl\curl}=\setBB{ a\odot \xi:=\frac{a\otimes \xi+\xi\otimes a}{2}}{ a,\xi \in \R^d},
\]
see for instance~\cite[Example 3.10(e)]{FonsecaMuller99} and~\cite[Proof of Theorem 1.7]{DePhilippisRindler16}. In this case Theorem~\ref{thm:main} implies the following:

\begin{corollary}\label{cor:BD}
Let \((u_j)_j\in \Wrm^{1,1}(\Omega;\R^d)\) be a sequence of maps such that 
\[
\sup_{j \in \N}\|\Ecal u_j\|_{\Lrm^1}<\infty \qquad\text{and}\qquad  \dist (\Ecal u_j, \{A,B\})\toLOne 0
\]
for some \(A,B \in \Rdds\) with \(A-B\notin \Lambda_{\curl\curl}\). Then, up to extracting a subsequence, either
\[
\int_{\Omega} |\Ecal u_{j}(x)- A| \dd x \to 0 \quad\textrm{or}\quad  \int_{\Omega} |\Ecal u_{j}(x)- B| \dd x \to 0\qquad\textrm{as \(j\to \infty\).}
\]
\end{corollary}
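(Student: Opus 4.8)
The plan is to obtain Corollary~\ref{cor:BD} as a direct instance of Theorem~\ref{thm:main}(B), applied to the second-order constant-coefficient operator $\Acal = \curl\curl$ acting on fields valued in $\Rdds$, which we identify with $\R^\ell$, $\ell = d(d+1)/2$. Thus the whole argument consists in checking that the hypotheses of that theorem are met by the sequence $v_j := \Ecal u_j$ with $\lambda := A$ and $\mu := B$.

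First I would record that $\curl\curl$ is a homogeneous linear differential operator of the form~\eqref{e:defA} (of order $k=2$), and that $\curl\curl \circ \Ecal \equiv 0$ as a composition of constant-coefficient differential operators; on the Fourier side this is just the statement that the symbol $\Abb_{\curl\curl}(\xi)$ annihilates every elementary symmetrized product $a \odot \xi$, which is the classical Saint-Venant compatibility identity. Since $u_j \in \Wrm^{1,1}(\Omega;\R^d)$, the field $v_j = \Ecal u_j$ is a well-defined element of $\Lrm^1(\Omega;\Rdds)$ (because $Du_j \in \Lrm^1$), and integrating by parts shows that $\Acal v_j = \curl\curl(\Ecal u_j) = 0$ in the sense of distributions on $\Omega$, precisely because $\curl\curl \circ \Ecal$ vanishes identically as an operator. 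Note that only the inclusion $\ker\Ecal \subset \ker(\curl\curl)$ enters here, not its converse, so no simple-connectedness of $\Omega$ is needed for this direction.

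Next I would invoke the identification of the wave cone recalled above, $\Lambda_{\curl\curl} = \setn{a \odot \xi}{a,\xi \in \R^d}$ (see~\cite[Example~3.10(e)]{FonsecaMuller99} and~\cite[Proof of Theorem~1.7]{DePhilippisRindler16}); the hypothesis $A - B \notin \Lambda_{\curl\curl}$ then says precisely that $\lambda - \mu \notin \Lambda_\Acal$. Combining this with the two remaining assumptions of the corollary, namely $\sup_j \norm{v_j}_{\Lrm^1} = \sup_j \norm{\Ecal u_j}_{\Lrm^1} < \infty$ and $\int_\Omega \dist(v_j, \{A,B\}) \dd x = \int_\Omega \dist(\Ecal u_j, \{A,B\}) \dd x \to 0$, all the hypotheses of Theorem~\ref{thm:main}(B) are verified.

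Applying Theorem~\ref{thm:main}(B) then produces a subsequence along which either $\int_\Omega \abs{\Ecal u_j - A} \dd x \to 0$ or $\int_\Omega \abs{\Ecal u_j - B} \dd x \to 0$, which is exactly the claimed dichotomy. Since the real work is carried out in Theorem~\ref{thm:main}, there is no genuine obstacle in this corollary; the only points deserving a line of justification are (i) that $\curl\curl$ is truly of the homogeneous form~\eqref{e:defA} and that $\curl\curl \circ \Ecal = 0$ holds distributionally even for merely $\Wrm^{1,1}$ maps, and (ii) the computation of $\Lambda_{\curl\curl}$, both of which are standard.
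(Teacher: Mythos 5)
Your proposal is correct and is exactly the argument the paper leaves implicit: the corollary is stated as an immediate instance of Theorem~\ref{thm:main}(B) with $\Acal=\curl\curl$, $v_j=\Ecal u_j$, $\lambda=A$, $\mu=B$, once one recalls that $\curl\curl\circ\Ecal=0$ (Saint-Venant) and the identification of $\Lambda_{\curl\curl}$. One small slip in wording: where you write that ``only the inclusion $\ker\Ecal\subset\ker(\curl\curl)$ enters,'' you of course mean the range inclusion $\ran\Ecal\subset\ker(\curl\curl)$, equivalently the operator identity $\curl\curl\circ\Ecal\equiv 0$; the surrounding sentence makes the intent clear and the observation that simple-connectedness is therefore not needed is correct, since Theorem~\ref{thm:main} only requires $\Omega$ open, bounded and connected.
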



Other, more complicated, differential inclusion for (bounded) symmetric gradients have been studied in~\cite{DolzmannMuller95b,CapellaOtto09,CapellaOtto12,Rindler11,Ruland16,Ruland16b} (but without the linear-growth assumption).

Despite the fact that many interesting cases of Theorem~\ref{thm:main} were already known, we nevertheless believe that the generality of Theorem~\ref{thm:main} is of independent interest:

First, it extends all the known results to general operators of any order.

Second, our result for the first time enables one to treat the case of maps that are merely bounded in \(\Lrm^1\). Hence, Theorem~\ref{thm:main} may be seen as a result of  \emph{compensated compactness theory in $\Lrm^1$}.

Third, the previously-known cases of Theorem~\ref{thm:main}  have been proved by \textit{ad hoc} techniques. On the contrary, our proof exploits in quite a clear way the heuristic  principle  that {\em when restricted to maps whose image is not in the wave cone, \(\Acal\) is elliptic} (which in this generality is too good to be true); see Section~\ref{s:2} for a more precise statement. Of course, this idea lies at the root of the Murat--Tartar compensated compactness theory, which also provides the conceptual framework of the present work. In the proof of Theorem~\ref{thm:main} we exploit some ideas which have been introduced in~\cite{DePhilippisRindler16}, where the first and third author proved the  natural  generalization of Alberti's rank-one theorem to the setting of \(\Acal\)-free measures.

We conclude this introduction by mentioning that one can also study more complicated differential inclusions. In the case of \(\Acal=\curl\), this has been a very active and fruitful area of research with several deep results  and nice  connections with other problems in mathematics.  Since here we cannot give a detailed account of the (extensive) literature,  we refer the reader to~\cite[Chapter~8]{RindlerBOOK} for a recent survey on the theory. 

It is worth noting that in the study of more general inclusions in the general setting of Theorem~\ref{thm:main}, the intrinsic geometry of the operator \(\Acal\) should play an important role. Indeed, for differential inclusions involving more than two states the type of results available seems to be  different depending on the particular choice of \(\Acal\). For instance, when \(\Acal=\curl\) it has been proved by \v{S}ver\'{a}k that  the three-state problem is  rigid  both for approximate and exact solutions~\cite{Sverak91b}. On the contrary, when \(\Acal=\diverg\), in~\cite[Section 4]{GarroniNesi04} it is shown that the three state-problem is not rigid for approximate solutions, while Palombaro and Ponsiglione~\cite{PalombaroPonsiglione04} showed that it is rigid for exact solutions; see also~\cite{PalombaroSmyshlyaev09}.


 \subsection*{Acknowledgements}

G.~D.~P.\ is supported by the MIUR SIR-grant ``Geometric Variational Problems" (RBSI14RVEZ). This project has received funding from the European Research Council (ERC) under the European Union's Horizon 2020 research and innovation programme (grant agreement 757254) for the project ``SINGULARITY''.  F.~R.\ also acknowledges the support from an EPSRC Research Fellowship on ``Singularities in Nonlinear PDEs'' (EP/L018934/1).

\section{Proof of Theorem~\ref{thm:main}}\label{s:2}

In this Section we prove Theorem~\ref{thm:main}. As mentioned in the introduction, the proof is based on the observation (which lies at the very definition of the wave cone) that, if \(\lambda \notin \Lambda_\Acal\),  then the operator \(\Acal\) restricted to functions whose image lies in \(\spn\{\lambda\}\) is {\em elliptic}  in the sense that is Fourier symbol does not vanish.  Exploiting this elliptic regularization together with some classical estimates in harmonic analysis in the spirit of~\cite{DePhilippisRindler16} allows us to deduce the desired rigidity.

We start with an elementary  lemma, which relates the \(\Lrm^1\)-convergence to zero of \(\dist(v_j,\{\lambda,\mu\})\) to the equi-integrability of the sequence \((v_j)_j\).
Recall that a sequence of measurable functions \(f_j \colon \Omega \to \R\) ($j \in \N$) is  said to converge  to \(0\) in measure if 
\[
\int_{\Omega} \min\{|f_j|,1\}\dd x\to 0\qquad \textrm{as \(j\to \infty\).}
\]

\begin{lemma}\label{lm:mis}
Let \(\Omega\subset \R^d\) be an open and bounded set, let \((v_j)_j\subset \Lrm^1(\Omega;\R^\ell)\) and let \(\lambda,\mu \in \R^\ell\). Then the following are equivalent:
\begin{itemize}
\item[(i)] \(\dist(v_j,\{\lambda,\mu\})\toLOne 0\);
\item[(ii)] the sequence \((v_j)_j\) is equi-integrable, i.e.,
\[
\lim_{t\to \infty} \sup_{j \in \N} \int_{\Omega\cap \{|v_j|> t\} }|v_j| \dd x =0, 
\]
and \(\dist(v_j,\{\lambda,\mu\})\to 0\) in measure;
\item[(iii)] the sequence \((v_j)_j\) is equi-integrable and \(\dist(v_j,\{\lambda,\mu\})\toLOne 0\).
\end{itemize}
\end{lemma}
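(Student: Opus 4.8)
The plan is to prove the three conditions equivalent via the cyclic chain (i)~$\Rightarrow$~(iii)~$\Rightarrow$~(ii)~$\Rightarrow$~(i), relying only on elementary real analysis: absolute continuity of the Lebesgue integral and the Vitali convergence theorem on the finite-measure set $\Omega$. The single workhorse will be the elementary two-sided comparison between $|v_j|$ and $g_j:=\dist(v_j,\{\lambda,\mu\})\ge0$: with $C:=\max\{|\lambda|,|\mu|\}$ the triangle inequality gives $g_j\le|v_j|+C$ and $|v_j|\le g_j+C$ pointwise on $\Omega$, so that for every $t>2C$ one has $\{|v_j|>t\}\subseteq\{g_j>t/2\}$ with $|v_j|\le2g_j$ on that set, and symmetrically $\{g_j>t\}\subseteq\{|v_j|>t/2\}$ with $g_j\le2|v_j|$ on that set. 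This is what will let me transfer the vanishing-tails equi-integrability back and forth between $(v_j)_j$ and $(g_j)_j$.

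For (i)~$\Rightarrow$~(iii) I would note that (i) already contains the $\Lrm^1$-convergence statement of (iii), so only equi-integrability of $(v_j)_j$ needs to be checked. Using the comparison, for $t>2C$ one gets
\[
\int_{\Omega\cap\{|v_j|>t\}}|v_j|\dd x\le 2\int_{\Omega\cap\{g_j>t/2\}}g_j\dd x\le 2\int_\Omega g_j\dd x=2\|g_j\|_{\Lrm^1}.
\]
Since $\|g_j\|_{\Lrm^1}\to0$, this controls the tails of all but finitely many of the $v_j$ uniformly; the remaining $v_1,\dots,v_N$ are fixed $\Lrm^1$ functions, whose tails $\int_{\{|v_j|>t\}}|v_j|\dd x$ vanish as $t\to\infty$ by absolute continuity, so a single large threshold $t_0\ge 2C$ handles them as well. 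The step (iii)~$\Rightarrow$~(ii) is then immediate: from $\min\{g_j,1\}\le g_j$ we get $\int_\Omega\min\{g_j,1\}\dd x\le\|g_j\|_{\Lrm^1}\to0$, i.e.\ $\dist(v_j,\{\lambda,\mu\})\to0$ in measure, while equi-integrability passes over unchanged.

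For (ii)~$\Rightarrow$~(i) I would first use the other half of the comparison to show that $(g_j)_j$ inherits equi-integrability: for $t>2C$,
\[
\sup_{j\in\N}\int_{\Omega\cap\{g_j>t\}}g_j\dd x\le 2\sup_{j\in\N}\int_{\Omega\cap\{|v_j|>t/2\}}|v_j|\dd x,
\]
and the right-hand side tends to $0$ as $t\to\infty$ by the equi-integrability of $(v_j)_j$. On the finite-measure set $\Omega$ this vanishing-tails property is precisely the uniform integrability hypothesis of the Vitali convergence theorem, and (ii) also supplies $g_j\to0$ in measure (which here coincides with $\int_\Omega\min\{g_j,1\}\dd x\to0$). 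Vitali then yields $g_j\to0$ in $\Lrm^1(\Omega)$, which is exactly (i).

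I do not anticipate any genuine difficulty: the lemma is essentially measure-theoretic bookkeeping. The two points that will need a little care are the separation of the finitely many initial indices in the equi-integrability step of (i)~$\Rightarrow$~(iii) — since $\|g_j\|_{\Lrm^1}\to0$ is not uniform smallness — and checking that the vanishing-tails form of equi-integrability used in the statement is, on a bounded (hence finite-measure) domain, exactly the uniform integrability needed to apply Vitali's theorem.
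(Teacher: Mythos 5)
Your proof is correct and follows essentially the same route as the paper's: both hinge on the two-sided comparison of $|v_j|$ with $\dist(v_j,\{\lambda,\mu\})$ to transfer equi-integrability between the two sequences, and then invoke Vitali's theorem on the finite-measure set $\Omega$ (together with the fact that $\Lrm^1$-convergent sequences are equi-integrable). The paper states this more compactly, while you spell out the bookkeeping (splitting off finitely many indices in the tail estimate), but the underlying argument is identical.
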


\begin{proof}
For \(t\ge 2\max \{|\lambda|,|\mu|\}\) we have
\begin{equation*}\label{e:s}
\big\{\dist(v,\{\lambda, \mu\})>2t\big\}\subset\big \{|v|>t\big\}\subset \big\{\dist(v,\{\lambda, \mu\})>t/2\big\}.
\end{equation*}
Hence, \((v_j)_j\) is equi-integrable if and only if \((\dist(v_j,\{\lambda, \mu\}))_j\) is equi-integrable. By Vitali's Theorem (which says that  sequence of equi-integrable functions converges to \(0\)  in measure if and only if it converges to \(0\) in \(\Lrm^1\)) together with the fact that $\Lrm^1$-converging sequences are equi-integrable, we immediately get that (i), (ii), and (iii) are equivalent.
\end{proof}


\begin{proof}[Proof of Theorem~\ref{thm:main}~(A)]




Shifting \(\tilde{v}(x) := v(x)-\mu\),  we can assume that \(\mu=0\) and \(v=\lambda \ONE_E\) with \(\lambda \notin \Lambda_{\Acal}\)  and 
\[
E=\setb{x\in \Omega}{ v(x)=\lambda}.
\]
The goal is to show that either \(E=\Omega\) or \(E=\emptyset\). We will prove that \(\ONE_E\in \Crm^\infty(\Omega)\), which immediately implies the conclusion since \(\Omega\) is connected. In order to do so, let \(\phi\in \Crm^\infty_c(\Omega)\) and let us consider the function \(w:=\phi  \ONE_E\in \Lrm^2(\R^d)\). Then,
\begin{equation}\label{e:key}
\Acal(\lambda w) =\phi \Acal (\lambda \ONE_E)+[\Acal,\phi](\lambda \ONE_E)=[\Acal,\phi](\psi \lambda \ONE_E),
\end{equation}
where \(\psi\in \Crm^\infty_c(\Omega)\) is identically \(1\) in \(\supp \phi\) and  for \(f\in\Lrm^2(\R^d) \) we have defined
\begin{align}
[\Acal,\phi](f) &:=\Acal( \phi f)-\phi \Acal (f)  \notag
\\
&\phantom{:}=\sum_{h=1}^{k-1}\sum_{|\beta|=h} B_{\beta}(x) \partial^\beta f  \label{e:comm}
\end{align}
with some coefficient matrices $B_\beta \in \Crm^\infty_{c}(\supp \phi;\R^n\otimes \R^\ell)$. The key point is that the commutator $[\Acal,\phi]$ is an operator of order \((k-1)\). More precisely, one easily checks that    
\[
[\Acal,\phi]: \Hrm^{s}(\R^d)\to \Hrm^{s-(k-1)}(\R^d)
\]
 for all \(s\in \R\), see for instance~\cite[Proof of Theorem 9.26]{Folland99book} or part~(B) of the proof below.  Here, \(\Hrm^s(\R^d)\) is the classical \(\Lrm^2\)-based  Sobolev space, that is,
\begin{equation}\label{e:sob}
f\in \Hrm^s(\R^d) \iff (\id - \Delta)^{s/2} f \in \Lrm^2(\R^d) \iff (1+4\pi^2|\xi|^2)^{s/2} \hat{f}(\xi) \in \Lrm^2(\R^d),
\end{equation}
where we denote the Fourier transform of $f \in \Lrm^2(\R^d)$ by
\[
\hat f(\xi)=\Fcal [f](\xi):=\int f(x) e^{-2\pi \ii x\cdot \xi} \dd x,
\]
which also lies $\Lrm^2(\R^d)$ by the Plancherel theorem. For general tempered distributions $f$ we define $\hat{f}$ in the distributional sense, see~\cite{Grafakos14book1} for details. 

Taking the Fourier transform of~\eqref{e:key}, we thus obtain
\[
\Abb(\xi) \lambda\, \hat w(\xi)=\Fcal \bigl[ [\Acal,\phi](\psi \lambda \ONE_E) \bigr](\xi),  \qquad \xi \in \R^d.
\]
Scalar multiplying the above equation with \(\overline{\Abb(\xi) \lambda}\in \C^n\),
\begin{equation}\label{e:key2}
\big(1+|\Abb(\xi)\lambda|^2\big)\hat w(\xi)=\overline{\Abb(\xi) \lambda}\cdot \Fcal \bigl[ [\Acal,\phi](\psi \lambda \ONE_E) \bigr](\xi)+\hat w (\xi):=\hat R (\xi).
\end{equation}
Since by assumption \(\lambda\notin \Lambda_\Acal\), there exists \(c>0\) such that  \(|\Abb(\xi)\lambda|\ge c|\xi|^{k}\) for all $\xi \in \R^d$. Hence, by the very definition of Sobolev spaces~\eqref{e:sob}, the operator \(T\) defined by 
\[
T(f) := \Fcal^{-1}\Bigg(\frac{\hat f(\xi)}{1+|\Abb(\xi)\lambda|^2}\Bigg)
\]
maps \(\Hrm^{s}(\R^d)\) into \(\Hrm^{s+2k}(\R^d)\) for all \(s\in \R\). Since, by our initial assumption, \( \ONE_E \in \Lrm^2(\R^d)=\Hrm^{0} (\R^d)\), we have that the right-hand side \(\hat R\) of~\eqref{e:key2} belongs to \( \Fcal(\Hrm^{1-2k}(\R^d))\). Thus,
\[
w=\phi \ONE_E=T(\Fcal^{-1}(\hat R)) \in \Hrm^1(\R^d) \qquad\textrm{for all \(\phi \in \Crm^1_c(\Omega)\)}.
\]
 This in turn  yields \(\hat R\in \Fcal(\Hrm^{2-2k}(\R^d))\), which implies that \(w=\phi \ONE_E\in \Hrm^2(\R^d)\) for all \(\phi \in \Crm^1_c(\Omega)\). Iterating, we obtain the desired regularity for \(\ONE_E\) and we conclude the proof (in fact, already the first step implies that \(\ONE_E\in \Hrm^1_{\rm loc}(\Omega)\) and this is enough to conclude since there are no non-constant characteristic functions in \(\Hrm^1_{\rm loc}(\Omega)\)).
\end{proof}


\begin{proof}[Proof of Theorem~\ref{thm:main}~(B)]
 As in the proof for~(A) we can assume that \(\mu=0\) and \(\lambda\notin \Lambda_\Acal\). By Lemma~\ref{lm:mis} the sequence \((v_j)\) is equi-integrable. Hence, by the Dunford--Pettis compactness theorem (see, for instance,~\cite[Theorem 1.38]{AmbrosioFuscoPallara00book}),  there exists a (non-relabeled) subsequence and a function \(v\in \Lrm^1(\Omega)\) such that 
\[
v_j \weakONE v.
\]
With
\[
  \eps_j :=\|\dist(v_{j}, \{\lambda,0\})\|_{\Lrm^1}^{1/2}\to 0
\]
let us define
\[
E_j:=\setb{x\in \Omega}{ |v_j(x)-\lambda| \le \eps_j}.
\]
We first show that
\begin{equation}\label{e:lim}
\|v_j-\lambda\ONE_{E_j}\|_{\Lrm^1} \to 0.
\end{equation}
Indeed, 
\[
A_j:= \setb{x\in \Omega}{ \dist(v_{j}(x), \{\lambda,0\})\le \eps_j}= E_j \cup \setb{x\in \Omega}{ |v_{j}(x)| \le \eps_j }
\]
and the union is disjoint provided \(\eps_j\) is small enough. Moreover,
\[
|\Omega\setminus A_j|\le \frac{1}{\eps_j} \int_{\Omega}  \dist(v_{j}, \{\lambda,0\}) \dd x\to 0.
\]
Hence,
\[
\begin{split}
\int_{\Omega} |v_j-\lambda\ONE_{E_j}| \dd x &= \int_{E_j }|v_j-\lambda\ONE_{E_j}| \dd x + \int_{\{|v_j|\le \eps_j\}}|v_j| \dd x  +  \int_{\Omega\setminus A_j }|v_j| \dd x 
\\
 &\le 2\eps_j|\Omega|+ \int_{\Omega\setminus A_j }|v_j| \dd x \to 0
\end{split}
\]
since the sequence \((v_j)\) is equi-integrable and  \(|\Omega\setminus A_j|\to 0\). This shows~\eqref{e:lim} and then also
\begin{equation}\label{e:lim2}
\lambda \ONE_{E_j} \weakONE v.
\end{equation}

We now claim:
\begin{equation}\label{e:claim}
\textrm{\emph{The sequence \( \ONE_{E_j} \) is (strongly) pre-compact in \(\Lrm^1(\Omega)\)}.}
\end{equation}
Before proving this claim, let us show  how this gives  the desired conclusion. Indeed,~\eqref{e:lim},~\eqref{e:lim2} and~\eqref{e:claim} imply that
\begin{equation}\label{e:pippo}
\|\lambda \ONE_{E_j}-v\|_{\Lrm^1} + \|v_j-v\|_{\Lrm^1}\to 0.
\end{equation}
In particular, \(v= \lambda \ONE_E\) for some set \(E\subset \Omega\). Since clearly \(\Acal v=0\), part (A) implies that either \(E=\Omega\) or \(E=\emptyset\). Together with~\eqref{e:pippo} this concludes the proof.

\medskip

We are thus left to show the claim~\eqref{e:claim}. By the equi-integrability, it is enough to show that \(w_j:=\phi\ONE_{E_j}\) is pre-compact for all \(\phi \in \Crm^\infty_c(\Omega)\). As in part (A) we have
\[
\Acal (\lambda w_j)=\Acal (\lambda w_j-\phi v_j)+\Acal (\phi v_j)= \Acal (\lambda w_j-\phi v_j)+[\Acal, \phi](\psi  v_j),
\] 
where \(\psi\in \Crm^\infty_c(\Omega) \) is identically \(1\) on \(\supp \phi\) and \([\Acal, \phi]\) is defined in~\eqref{e:comm}. Taking the Fourier transform of the above equation and multiplying by \(\overline{\Abb(\xi) \lambda}\), we get
\[
(1+|\Abb(\xi) \lambda|^2) \hat{w}_j(\xi)=\overline{\Abb(\xi) \lambda} \cdot \Abb(\xi) \hat{z}_j(\xi)+\overline{\Abb(\xi) \lambda} \cdot \Fcal \bigl[ [\Acal, \phi](u_j) \bigr](\xi)+\hat{w}_j(\xi),
\]
where we have set
\[
   z_j:= \lambda w_j-\phi v_j,  \qquad
   u_j:=\psi v_j.
\]
Hence,
\begin{equation}\label{e:dec}
w_j=T_1(z_j)+ T_{2}(u_j)+T_3(w_j),
\end{equation}
where \(z_j, u_j, w_j\in \Lrm^1(\supp \psi)\) and 
\[
\begin{split}
T_1(z) &:= \Fcal^{-1} \Bigg(\frac{\overline{\Abb(\xi) \lambda} \cdot \Abb(\xi) \hat z(\xi)}{1+|\Abb(\xi) \lambda|^2}\Bigg),
\\
T_2(u) &:= \Fcal^{-1} \Bigg(\frac{\overline{\Abb(\xi) \lambda} \cdot \Fcal\bigl[[\Acal, \phi](u)\bigr](\xi)}{1+|\Abb(\xi) \lambda|^2}\Bigg),
\\
T_3(w) &:= \Fcal^{-1} \Bigg(\frac{\hat w(\xi)}{1+|\Abb(\xi) \lambda|^2}\Bigg).
\end{split}
\]
We also have, by~\eqref{e:lim},
\begin{equation}\label{e:stime}
z_j = \phi (\lambda \ONE_{E_j}-v_j) \toLOne 0,  \qquad  \sup_{j \in \N}\int_{\R^d} |w_j|+|u_j|<\infty.
\end{equation}

We now claim that
\begin{equation} \label{e:claim2}
\left\{
\begin{aligned}
&\|T_1(z_j)\|_{\Lrm^{1,\infty}}:=\sup_{t>0} \, t|\{|T_1(z_j)|>t\}\to 0;\\
&T_1(z_j)\to 0 \quad\textrm{in the sense of distributions};
\\
&\text{\((T_2(u_j))_j\), \((T_3(w_j))_j\)  are pre-compact in \(\Lrm_{\rm loc}^1\)}.
\end{aligned}
\right.
\end{equation}
The above facts {\em together with  the positivity of \(w_j\)} imply~\eqref{e:claim}, see~\cite[Proof of Theorem 1.1]{DePhilippisRindler16} and also~\cite[Lemma 3.3]{DePDeRGhi2}, where the convergence lemma is explicitly stated.

The proof of~\eqref{e:claim2} closely follows the proof of~\cite[Theorem 1.1]{DePhilippisRindler16}. Indeed, by assumption, \(|\Abb(\xi) \lambda|\ge c |\xi|^k\) for some \(c>0\). Thus, the H\"ormander-Mihlin multiplier theorem, see~\cite[Theorem 5.2.7]{Grafakos14book1}, yields that $T_1$ is a bounded operator from $\Lrm^1(\R^d)$ to $\Lrm^{1,\infty}(\R^d)$ and in particular, via~\eqref{e:stime},
\[
\|T_1(z_j)\|_{\Lrm^{1,\infty}}\le C\|z_j\|_{\Lrm^1}\to 0.
\]
Moroever, for \(\zeta \in \Crm^\infty_c(\R^d)\),
\[
\dprb{ T_1(z_j),\zeta}=\dprb{ z_j,T^*_1(\zeta)} \to 0,
\]
where \(T^*_1:  \Crm^\infty_c(\R^d)\to \Scal(\R^d)\), the Schwartz space of test functions, is the adjoint of \(T_1\). 

We are thus left with the task to show that \((T_2(u_j))_j\) and \((T_3(w_j))_j\) are pre-compact in  \(\Lrm_{\rm loc}^1\). To this end note that,   by~\eqref{e:comm},  \(T_2(u_j)\)  can be  written as a finite  sum of terms of the form 
\begin{equation*}\label{fj}
f_j^\beta= Q\circ(\id-\Delta)^{-\frac{k}{2}} \circ P_{\beta} \circ (\id-\Delta)^{\frac{|\beta|-k}{2}} [u^\beta_j],
\end{equation*}
where \(\beta\in (\N \cup \{0\})^{d}\),  \(0\le |\beta|\le (k-1)\), \(\sup_{j \in \N}\|u^\beta_j\|_{\Lrm^1}\le C\),
\[
Q[u]=\Fcal^{-1} \Bigl[(1+|\Abb(\xi)\lambda |^2)^{-1}(1+4\pi^2|\xi|^2)^{k/2}\, \overline{\Abb(\xi)\lambda} \hat {u}(\xi)\Bigr] ,
\]
and \(P_\beta\) is the $k$'th-order pseudo-differential operator given by
\[
P_{\beta}[u](x)=\int \frac{(2\pi\ii)^{|\beta|}\xi^\beta}{(1+4\pi^2|\xi|^2)^{\frac{|\beta|-k}{2}}} \,B_{\beta}(x) \hat{u}(\xi) \, e^{2\pi\ii x \cdot \xi} \dd \xi.
\]
The composition \((\id-\Delta)^{-k/2} \circ P_{\beta}\) is a pseudo-differential operator of order \(0\), see~\cite[Theorem 2, Chapter VI]{Stein93book},  and thus bounded from  \(\Lrm^p(\R^d)\) to itself, see~\cite[Proposition 4 in Chapter VI]{Stein93book}. By  the   H\"ormander--Mihlin multiplier theorem,  aasslso \(Q\) is a bounded  operator  from  \(\Lrm^p(\R^d)\) to itself.  Since \(|\beta|\le k-1\), one easily checks that \((\id-\Delta)^{(|\beta|-k)/2}\) is compact from \(\Lrm^1(\supp \psi) \)  to \(\Lrm^p(\R^d)\) for \(1<p<p(d,k-|\beta|)\), see for instance~\cite[Lemma 2.1]{DePhilippisRindler16}. We thus infer that \((T_2(u_j))_{j}\) is pre-compact in \(\Lrm^1_\loc(\R^d)\). Since a similar (and actually easier) argument applies to \((T_3(w_j))_{j}\), this concludes  the proof of~\eqref{e:claim2} and thus of the theorem.
\end{proof}

\begin{remark}\label{rmk:lo} It is clear from the proof that Theorem \ref{thm:main} can be extended to include non-homogeneous operators and (asymptotically vanishing) right-hand sides. More precisely,  assume that 
\[
\begin{split}
\Acal=\sum_{|\alpha|\le k} A_\alpha \partial^\alpha={\sum_{|\alpha|=k} A_\alpha \partial^\alpha}+\sum_{|\alpha|\le (k-1)} A_\alpha \partial^\alpha
:=\Acal^k+\Acal^{<k}
\end{split}
\]
and that \((v_j)_j\subset \Lrm^1(\Omega,\R^\ell)\) satisfies
\[
\Acal v_j=r_j, \qquad \dist(v_j,\{\lambda, \mu\})\toLOne 0,
\]
where  \(r_j\) can be written as 
\begin{equation}\label{e:rem}
r_j=\sum_{|\beta|=k}\partial^{\beta} f^\beta_j\qquad\text{with}\qquad \|f_j^\beta\|_{\Lrm^1}\to 0.
\end{equation}
 Moreover, we assume
\[
\lambda-\mu\notin \Lambda_{\Acal^k}:=\bigcup_{|\xi|=1} \ker\Abb^k(\xi),\qquad \Abb^k(\xi):=(2\pi\ii)^k\sum_{|\alpha|=k}A_\alpha\xi^\alpha.
\]
Then, up to taking a subsequence, either \(\|v_j-\lambda\|_{\Lrm^1}\to 0\) or \(\|v_j-\mu\|_{\Lrm^1}\to 0\). 
Note in particular that \eqref{e:rem}   includes the case in which \(r_j\to 0\) strongly in \(W^{-k,p'}\) for \(p>1\).

With these assumptions, the proof of Theorem \ref{thm:main} can be repeated almost verbatim with \(\Acal\) replaced by \(\Acal^{k}\). The presence of the non-homogeneous terms \(\Acal^{<k}v_j\)  and  \(r_j\) can be dealt with by adding them to the right-hand side of~\eqref{e:dec}. Indeed, this yields additional terms of the form
\[
\begin{split}
T_{4}( v_j)&:=\Fcal^{-1}\Bigg( \frac{\overline{\Abb^k(\xi)\lambda}\,\Fcal\big[\phi\Acal^{<k} v_j\big](\xi)}{1+|\Abb^k(\xi) \lambda|^2}\Bigg),
\\
 T_5(f^\beta_j)&:=\Fcal^{-1}\Bigg( \frac{(2\pi \ii)^k\overline{\Abb^k(\xi)\lambda}\,\xi^{\beta}\hat{f}_j^\beta(\xi)}{1+|\Abb^k(\xi) \lambda|^2}\Bigg).
\end{split}
\]
By the H\"ormander--Mihlin theorem, \(T_{5}(f^\beta_j)\) converges to \(0\) in \(\Lrm^{1,\infty}\) and in the sense of distributions, while, by the same reasoning used for  \((T_2(u_j))_{j}\), one shows that \((T_4(u_j))_j\) is pre-compact in \(\Lrm^1_{\rm loc}\). Gathering together these facts, we may conclude the proof by the very same arguments as in Theorem \ref{thm:main}.

\end{remark}



\providecommand{\bysame}{\leavevmode\hbox to3em{\hrulefill}\thinspace}
\providecommand{\MR}{\relax\ifhmode\unskip\space\fi MR }
\providecommand{\MRhref}[2]{%
  \href{http://www.ams.org/mathscinet-getitem?mr=#1}{#2}
}
\providecommand{\href}[2]{#2}

\end{document}